\newtheorem{theorem}{Theorem}[section]
\newtheorem{proposition}[theorem]{Proposition}
\theoremstyle{definition}\newtheorem{remark}[theorem]{Remark}
\newtheorem*{question*}{Question}
\theoremstyle{definition}\newtheorem{definition}[theorem]{Definition}
\newcommand{\mono}{\ar@{^{(}->}}
\author{Mike Hensler}
\address{Department of Mathematical Sciences \\ Indiana University East \\ Richmond, IN 47374}
\email{michens@iu.edu}
\author{Hannah Klawa}
\address{Department of Mathematical Sciences \\ Indiana University East \\ Richmond, IN 47374}
\email{hklawa@iu.edu}
\title{Graded Injective Domains}
\subjclass[2010]{Primary: 13G05, Secondary: 13A02,
13A15, 13B30, 13F05.}
\keywords{Graded integral domain, graded going-down, graded injective domains, graded $i$-domains, graded mated domains, graded overrings, graded Pr\"{u}fer domains}
\begin{document}

\begin{abstract}
  An integral domain \(R\) is an $i$-domain if for every overring \(S\) of \(R\),
  \(\text{Spec}(S) \rightarrow \text{Spec}(R)\) is injective and is a mated integral if for every overring \(S\) of \(R\)
  and prime ideal \(P\) of \(R\) such that \(PS \neq S\), there exists exactly one prime ideal \(Q\) of \(S\) such that
  \(Q \cap R = P\).
  In this paper, we explore graded notions of $i$-domains and mated domains and their connection with gr-Pr\"{u}fer domains.

\end{abstract}

\maketitle
\section{Introduction}

Let $R$ be an integral domain with fraction field $K$. An \emph{overring} of $R$ is a integral domain $S$ such that $R \subseteq S \subseteq K$. Recall from~\cite{DobDaw-GDpoly} that an extension of integral domains $R \subseteq S$ is a \emph{mated extension} if for every  prime ideal \(P\) of \(R\) such that \(PS \neq S\), there exists exactly one prime ideal \(Q\) of \(S\) such that \(Q \cap R = P\) and an integral domain $R$ is said to be a \emph{mated domain} if $R \subseteq S$ is a mated extension for every overring $S$ of $R$. In~\cite[Proposition 3.6]{Dob-going-down-simple2}, Dobbs proves that for an integrally closed domain $R$, $R$ is Pr\"{u}fer if and only if it is a mated integral domain.

In~\cite{Pap-topologically}, Papick introduces injective extensions and studies results for injective extensions and domains. An extension $R \subseteq S$ of integral domains is an \emph{injective extension }(or \emph{$i$-extension}) if the canonical map $\text{Spec}(S) \rightarrow \text{Spec}(R)$ is injective and an integral domain $R$ is an \emph{$i$-domain} if $R \subseteq S$ is an injective extension for every overring $S$ of $R$. When an extension satisfies going-down, the notions of $i$-extensions and mated extensions coincide~\cite[Lemma 2.2]{Pap-topologically}.

A graded analog of injective extensions was introduced in~\cite{Mer-propgradover}. The purpose of this paper is to further study graded injective extensions as well as a notion of graded mated extensions and give give some connections to gr-Pr\"{u}fer domains.

\section{Preliminary Material}
A \emph{torsionless grading monoid} is a torsion-free commutative cancellative monoid. Throughout this paper, we will let \( \Gamma \) be a torsionless grading monoid. An integral domain \(R\) is a \emph{\(\Gamma \)-graded domain} if \[R = \bigoplus_{\alpha \in \Gamma}R_{\alpha}\]
where each \(R_{\alpha}\) is an additive subgroup of \(R\) and \(R_{\beta}R_{\gamma} \subseteq R_{\beta + \gamma}\) for
all \(\beta, \gamma \in \Gamma \). A nonzero element \(r \in R_{\alpha}\) is \emph{homogeneous} of degree \(\alpha \)
and each \(R_{\alpha}\) is a \emph{graded component} of \(R\). All graded integral domains in this paper will be assumed to be $\Gamma$-graded integral domains where $\Gamma$ is a torsionless grading monoid.

Let \[R = \bigoplus_{\alpha \in \Gamma}R_{\alpha},\] be a \(\Gamma \)-graded integral domain.
Then \(R_H\) where \[H = \{x \in R\, | \, x \neq 0\text{ is homogeneous}\} \] is the \emph{graded quotient field} of
\(R\) and is a \(\langle \Gamma \rangle = \{a - b\, | \, a, b \in \Gamma \} \)-graded domain where
\[(R_H)_{\alpha} = \left\lbrace\tfrac{f}{g}\,|\,f,g \textnormal{ are homogeneous},
  g \neq 0\textnormal{ and }\deg(f) - \deg(g) = \alpha\right\rbrace \]
for \(\alpha \in \langle \Gamma \rangle \).
The graded quotient field of an integral domain need not be a field. For example, the integral domain $R =\mathbb{Z}[x]$ is a $\mathbb{Z}$-graded domain with $R_\alpha = \mathbb{Z}x^\alpha$ for $\alpha \geq 0$ and $R_\alpha = \{0\}$ for $\alpha < 0$. The graded quotient field of $\mathbb{Z}[x]$ is $\mathbb{Q}[x, 1/x]$ which is not a field.

The graded (or homogeneous) primes of \(R\) will be denoted \(h\textnormal{-Spec}(R)\) and the graded maximal ideals of
\(R\) will be denoted \(h\textnormal{-Max}(R)\).
A graded integral domain \(R\) with a single graded maximal ideal is gr-local.

We will denote the integral closure of an integral domain \(R\) with \(R^\prime \).

In~\cite{Mer-propgradover}, graded $i$-extensions and graded $i$-domains are introduced. A ring extension \( R \subseteq T \) is a \textit{graded i-extension} if for
every pair of homogeneous prime ideals \( Q_1 \) and \( Q_2 \) of \( T \) such
that \( Q_1 \cap R = Q_2 \cap R \), we have that \( Q_1 = Q_2 \) (that is, $h\text{-Spec}(T) \rightarrow h\text{-Spec}(R)$ is injective).

\section{Graded Mated Domains}

It turns out when an extension satisfies going-down, it follows directly that the notions of $i$-extensions and mated extensions coincide as is stated without proof in~\cite[Lemma 2.2]{Pap-topologically}. We provide a proof below so the reader can see how the proof for the graded version is analogous with appropriate graded substitutions.

\begin{proposition}[\cite{Pap-topologically}, Lemma 2.2]\label{gd-mated} Assume that $R \subseteq T$ satisfies going-down. Then $R \subseteq T$ is an $i$-extension if and only if $R \subseteq T$ is mated.
\end{proposition}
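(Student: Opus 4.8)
The plan is to prove the two implications separately, after recording the relevant definitions in contraction form: the extension $R \subseteq T$ is an $i$-extension exactly when any two primes $Q_1, Q_2$ of $T$ with $Q_1 \cap R = Q_2 \cap R$ satisfy $Q_1 = Q_2$, and it is mated exactly when every $P \in \text{Spec}(R)$ with $PT \neq T$ admits exactly one prime $Q$ of $T$ lying over it. The guiding observation is that going-down is needed in only one of the two directions; the other follows formally.

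For the implication that mated implies $i$-extension, I would argue directly and use no hypothesis on going-down. Suppose $Q_1, Q_2$ are primes of $T$ with common contraction $P := Q_1 \cap R = Q_2 \cap R$. Since $P \subseteq Q_1$ and $Q_1$ is an ideal of $T$, we have $PT \subseteq Q_1 \subsetneq T$, so $PT \neq T$. The mated hypothesis then furnishes exactly one prime of $T$ lying over $P$; as both $Q_1$ and $Q_2$ lie over $P$, they must coincide, giving injectivity.

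For the converse, that $i$-extension implies mated, I would split the ``exactly one'' requirement into uniqueness and existence. Uniqueness is immediate from injectivity: any two primes over a common $P$ are equal. Existence is the step where going-down enters, and I expect it to be the only substantive point. Given $P$ with $PT \neq T$, the proper ideal $PT$ lies in some prime $Q'$ of $T$; since $P = P \cdot 1 \subseteq PT \subseteq Q'$, the contraction $P' := Q' \cap R$ contains $P$. Applying going-down to the chain $P \subseteq P'$ in $R$ together with the prime $Q'$ lying over $P'$ produces a prime $Q \subseteq Q'$ of $T$ with $Q \cap R = P$, which establishes existence and completes this direction.

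The main obstacle is precisely this existence step: without going-down there is no guarantee that the proper ideal $PT$ can be refined to a prime contracting exactly to $P$ rather than to a strictly larger prime, so going-down is exactly what is being used. Everything else is formal manipulation of the definitions. For the graded analogue one substitutes $h\text{-Spec}$ for $\text{Spec}$, homogeneous primes for primes throughout, and graded going-down for going-down; with these replacements the argument transfers verbatim.
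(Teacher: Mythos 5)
Your proposal is correct and follows essentially the same route as the paper's proof: the mated-implies-injective direction via the observation that $PT \subseteq Q_1$ forces $PT \neq T$, and the converse by choosing a prime (the paper uses a maximal ideal) containing $PT$ and then applying going-down to descend to a prime contracting to $P$, with uniqueness coming for free from injectivity. You also correctly isolate, as the paper's accompanying remark does, that going-down is needed only for the existence half of the mated condition.
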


\begin{proof}
  First, suppose \( R \subseteq T \) is mated.
  Let \( Q_1 \) and \( Q_2 \) be two distinct prime ideals of \( T \) such
  that \( Q_1 \cap R = Q_2 \cap R = P \) for some prime ideal \( P \) of
  \( R \).
  For contradiction, suppose \( PT = T \).
  Then, \( T = PT = (Q_1 \cap R)T \subseteq Q_1 \).
  This implies that \( Q_1 = T \), which is a contradiction.
  Therefore, \( PT \ne T \).
  Since \( R \subseteq T \) is mated, we have that \( Q_1 = Q_2 \).
  Therefore, \( R \subseteq T \) is an i-extension.

  Next, suppose \( R \subseteq T \) is an i-extension.

  Let \( P_1 \) be a prime ideal of \( R \) such that \( P_1T \ne T \).
  Then, \( P_1T \) is a proper ideal of \( T \).
  We have that \( P_1T \) must be contained in some maximal ideal \( Q_2 \) of
  \( T \).
  As \( Q_2 \) is maximal, it is also prime.
  Further, we have that \( P_2 = Q_2 \cap R \) is a prime ideal of \( R \),
  and \( P_1 \subseteq P_2 \).
  As \( R \subseteq T \) satisfies GD, we have some prime ideal \( Q_1 \) of
  \( T \) such that \( Q_1 \subseteq Q_2 \) and \( Q_1 \cap R = P_1 \).

  Now, let \( Q_1 \) and \( Q_2 \) be two distinct prime ideals of \( T \)
  such that \( Q_1 \cap R = Q_2 \cap R = P \) for some prime ideal \( P \) of
  \( R \).
  Since \( R \subseteq T \) is an i-extension, we have that \( Q_1 = Q_2 \).

  Therefore, \( R \subseteq T \) is mated.
\end{proof}

\begin{remark}
  This proof only uses the fact that \( R \subseteq T \) satisfies GD
  when showing that \( R \subseteq T \) is an i-extension implies that
  \( R \subseteq T \) is mated.
  It is not necessary to assume that \( R \subseteq T \) satisfies GD to show
  that \( R \subseteq T \) is mated implies that \( R \subseteq T \) is an
  i-extension; this holds for any ring extension.
\end{remark}

\begin{definition} A ring extension \( R \subseteq T \) is \textit{graded-mated} if for every
  homogeneous prime ideal \( P \) of \( R \) such that \( PT \ne T \), there
  exists a homogeneous prime ideal \( Q \) of \( T \) such that \( Q \cap R = P \).
  Further, for every pair of homogeneous prime ideals \( Q_1 \) and \( Q_2 \) of
  \( T \) such that \( Q_1 \cap R = Q_2 \cap R = P \), we have that \( Q_1 = Q_2 \).
\end{definition}

The notion of graded going-down was introduced by Sahandi and Shirmohommadi in~\cite[Definition 2.1]{Sah-gradedgd}. 

\begin{definition} We say that a ring extension \( R \subseteq T \) satisfies \textit{graded-GD} if
  for every pair of homogeneous prime ideals \( P_1 \) and \( P_2 \) of \( R \)
  such that \( P_1 \subseteq P_2 \), if there exists a homogeneous prime ideal
  \( Q_2 \) of \( T \) such that \( Q_2 \cap R = P_2 \), then there exists a
  homogeneous prime ideal \( Q_1 \) of \( T \) such that \( Q_1 \cap R = P_1 \)
  and \( Q_1 \subseteq Q_2 \).
\end{definition}

The following is a graded analog of~\cite[Lemma 2.2]{Pap-topologically}.

\begin{proposition}\label{gr-pap76-lemma2.2}
  Assume that \( R \subseteq T \) satisfies graded-GD\@.
  Then \( R \subseteq T \) is a graded i-extension if and only if
  \( R \subseteq T \) is graded-mated.
\end{proposition}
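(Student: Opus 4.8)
The plan is to follow the template of Proposition~\ref{gd-mated} essentially verbatim, replacing each ideal-theoretic gadget by its homogeneous counterpart. Two graded facts will carry the whole argument: first, if $P$ is a homogeneous ideal of $R$ then the extension $PT$ is again homogeneous in $T$, and if $Q$ is a homogeneous prime of $T$ then $Q \cap R$ is a homogeneous prime of $R$; second, every proper homogeneous ideal of $T$ is contained in some graded maximal ideal of $T$, and every graded maximal ideal is a homogeneous prime. These are the ``appropriate graded substitutions'' for the statements ``$PT$ is a proper ideal'' and ``$PT$ sits inside a maximal, hence prime, ideal'' used in the ungraded proof.

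For the forward direction (graded-mated implies graded i-extension), I would observe that the uniqueness clause built into the definition of graded-mated is word-for-word the defining condition of a graded i-extension, so nothing is required beyond unwinding the definitions. If one prefers to mirror Proposition~\ref{gd-mated} more literally, take distinct homogeneous primes $Q_1, Q_2$ of $T$ with common contraction $P = Q_1 \cap R = Q_2 \cap R$, note that $P$ is a homogeneous prime of $R$, and rule out $PT = T$ via the inclusion $T = PT = (Q_1 \cap R)T \subseteq Q_1$, which would force $Q_1 = T$; then the graded-mated hypothesis yields $Q_1 = Q_2$. As in the remark following Proposition~\ref{gd-mated}, this direction uses no hypothesis on graded-GD.

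The reverse direction (graded i-extension implies graded-mated) is where graded-GD enters, and it splits along the two clauses of the definition. The uniqueness clause is immediate from the graded i-extension hypothesis. For the existence clause, let $P$ be a homogeneous prime of $R$ with $PT \ne T$. Then $PT$ is a proper homogeneous ideal of $T$, so it is contained in a graded maximal ideal $Q_2$ of $T$, which is a homogeneous prime. Setting $P_2 = Q_2 \cap R$ produces a homogeneous prime of $R$ with $P \subseteq P_2$, so applying graded-GD to the chain $P \subseteq P_2$ together with the homogeneous prime $Q_2$ yields a homogeneous prime $Q_1 \subseteq Q_2$ of $T$ with $Q_1 \cap R = P$. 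This $Q_1$ is exactly the homogeneous prime demanded by the existence clause, which completes the proof.

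The only step I expect to require genuine care is the graded replacement of ``a proper ideal lies in a maximal ideal'': I must invoke that a proper homogeneous ideal is contained in a graded maximal ideal and that graded maximal ideals are homogeneous primes, rather than the ungraded Krull/Zorn statement, since an arbitrary maximal ideal of $T$ need not be homogeneous and the graded-GD hypothesis only applies to homogeneous primes. Everything else is a faithful translation of the ungraded argument, with the homogeneity of $PT$ and of the contractions $Q \cap R$ verified along the way.
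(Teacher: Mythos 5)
Your proposal is correct and follows essentially the same route as the paper's own proof: the forward direction rules out \(PT = T\) via \(T = (Q_1 \cap R)T \subseteq Q_1\), and the reverse direction embeds the proper homogeneous ideal \(PT\) in a graded maximal (hence homogeneous prime) ideal \(Q_2\) and applies graded-GD to \(P \subseteq Q_2 \cap R\). Your closing caution about using graded maximal ideals rather than arbitrary maximal ideals is exactly the ``appropriate graded substitution'' the paper makes.
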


\begin{proof} The proof is parallel to that of Proposition~\ref{gd-mated} with appropriate graded substitutions.
  First, suppose \( R \subseteq T \) is graded-mated.
  Let \( Q_1 \) and \( Q_2 \) be two distinct homogeneous prime ideals of
  \( T \) such that \( Q_1 \cap R = Q_2 \cap R = P \) for some prime ideal
  \( P \) of \( R \).
  For contradiction, suppose \( PT = T \).
  Then, \( T = PT = (Q_1 \cap R)T \subseteq Q_1 \).
  This implies that \( Q_1 = T \), which is a contradiction.
  Therefore, \( PT \ne T \).
  Since \( R \subseteq T \) is graded-mated, we have that \( Q_1 = Q_2 \).
  Therefore, \( R \subseteq T \) is a graded i-extension.

  Next, suppose \( R \subseteq T \) is a graded i-extension.

  Let \( P_1 \) be a homogeneous prime ideal of \( R \) such that
  \( P_1T \ne T \).
  Then, \( P_1T \) is a proper homogeneous ideal of \( T \).
  We have that \( P_1T \) must be contained in some homogeneous maximal ideal
  \( Q_2 \) of \( T \).
  As \( Q_2 \) is homogeneous maximal, it is also homogeneous prime.
  Further, we have that \( P_2 = Q_2 \cap R \) is a homogeneous prime ideal of
  \( R \), and \( P_1 \subseteq P_2 \).
  As \( R \subseteq T \) satisfies graded-GD, we have some homogeneous prime
  ideal \( Q_1 \) of \( T \) such that \( Q_1 \subseteq Q_2 \) and
  \( Q_1 \cap R = P_1 \).

  Now, let \( Q_1 \) and \( Q_2 \) be two distinct homogeneous prime ideals of
  \( T \) such that \( Q_1 \cap R = Q_2 \cap R = P \) for some homogeneous prime
  ideal \( P \) of \( R \).
  Since \( R \subseteq T \) is a graded i-extension, we have that
  \( Q_1 = Q_2 \).

  Therefore, \( R \subseteq T \) is graded-mated.
\end{proof}

\begin{remark}
  This proof only uses the fact that \( R \subseteq T \) satisfies graded-GD
  when showing that \( R \subseteq T \) is a graded i-extension implies that
  \( R \subseteq T \) is graded-mated.
  It is not necessary to assume that \( R \subseteq T \) satisfies graded-GD to
  show that \( R \subseteq T \) is mated implies that \( R \subseteq T \) is a
  graded i-extension; this holds for any graded ring extension.
\end{remark}

In the following proposition, we give a graded analog of part of~\cite[Theorem 3.6]{Dob-going-down-simple2} where it is shown that for an integrally closed domain $R$, $R$ is Pr\"{u}fer if and only if it is mated.

\begin{proposition}
  Let \(R\) be an integrally closed graded integral domain.
  Then \(R\) is gr-Pr\"{u}fer if and only if \(R\) is gr-mated.
\end{proposition}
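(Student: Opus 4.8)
The plan is to imitate Dobbs' classical argument, treating the two implications separately and reducing each to the graded machinery already in place: Proposition~\ref{gr-pap76-lemma2.2} together with the remark immediately following it, and the graded counterpart of Papick's theorem that an integrally closed $i$-domain is precisely a Pr\"{u}fer domain. The only genuinely new ingredient beyond Proposition~\ref{gr-pap76-lemma2.2} will be this graded Papick characterization; the rest is a matter of checking hypotheses.

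For the forward direction, suppose $R$ is gr-Pr\"{u}fer and let $S$ be an arbitrary graded overring of $R$. I would verify the two hypotheses of Proposition~\ref{gr-pap76-lemma2.2} for $R \subseteq S$. First, a gr-Pr\"{u}fer domain is a gr-$i$-domain, so $R \subseteq S$ is a graded $i$-extension. Second, I would argue that a gr-Pr\"{u}fer domain satisfies graded-GD with every graded overring: localizing at each graded maximal ideal turns $R$ into a gr-valuation domain, and graded overring extensions of gr-valuation domains are gr-flat and hence satisfy graded-GD\@. Proposition~\ref{gr-pap76-lemma2.2} then yields that $R \subseteq S$ is graded-mated, and since $S$ was arbitrary, $R$ is gr-mated.

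For the converse, suppose $R$ is integrally closed and gr-mated. Here graded-GD is not needed. By the remark following Proposition~\ref{gr-pap76-lemma2.2}, for any graded ring extension graded-matedness already forces the graded $i$-extension property; hence every graded overring extension $R \subseteq S$ is a graded $i$-extension, i.e.\ $R$ is a gr-$i$-domain. Since $R$ is integrally closed, the graded analog of Papick's theorem---an integrally closed gr-$i$-domain is gr-Pr\"{u}fer---then gives that $R$ is gr-Pr\"{u}fer.

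The main obstacle is precisely the graded Papick characterization invoked in the converse. If it is already available from the treatment of gr-$i$-domains in~\cite{Mer-propgradover}, the proposition follows immediately; otherwise its proof is the substantive part of the work. To establish it, one reduces to showing that $R_M$ is a gr-valuation domain for every graded maximal ideal $M$, i.e.\ that for each homogeneous element $t$ of the graded quotient field either $t \in R_M$ or $t^{-1} \in R_M$. The homogeneous simple overrings $R[t]$ and $R[t^{-1}]$ are the natural test objects: injectivity of $h\text{-Spec}(R[t]) \rightarrow h\text{-Spec}(R)$, together with integral closedness to rule out the integral (fiber-collapsing) case, should force one of $t, t^{-1}$ into $R_M$. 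Carrying out this homogeneous valuation argument carefully---checking that the relevant overrings are graded and that the degree bookkeeping in the graded quotient field is consistent---is where the real effort lies.
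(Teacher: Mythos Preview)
Your forward direction matches the paper's: both verify graded-GD and the graded $i$-extension property for an arbitrary graded overring, then apply Proposition~\ref{gr-pap76-lemma2.2}. The paper cites \cite[Remark 2.9]{Kla-grperi} for gr-GD and obtains the $i$-extension property from \cite[Theorem 3.1]{ACZ-grpruf} together with \cite[Theorem 19.15]{Gilmer}, which is exactly your ``gr-Pr\"{u}fer $\Rightarrow$ gr-$i$-domain'' step unpacked.

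The converse is where you diverge. You pass through the intermediate notion of gr-$i$-domain and then invoke (or propose to prove) a graded Papick characterization, sketching a homogeneous valuation argument with the test overrings $R[t]$, $R[t^{-1}]$. The paper does not take this detour: from gr-matedness it extracts uniqueness of graded primes lying over a given graded prime $P$, then appeals directly to the \emph{ungraded} \cite[Theorem 19.15]{Gilmer} to conclude that $R_P$ is a valuation domain, transfers this to $R_{(P)}$ being gr-valuation via \cite[Lemma 4.3]{Sah-chargrpvmd}, identifies $T_{(M)} = R_{(P)}$ via \cite[Theorem 2.3]{AAC-grval}, and finishes by showing every graded overring is flat (\cite[Proposition 2.7 and Remark 2.9]{Kla-grperi}). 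The paper's route is shorter precisely because it borrows the hard step from the ungraded theory rather than redoing the valuation argument homogeneously; your route is more self-contained in the graded world but makes you responsible for the graded Papick theorem, which in this paper is only established later (as the integrally closed case of the proposition in Section~4) and itself leans on \cite[Theorem 19.15]{Gilmer}. One small slip: where you write ``$R_M$ is a gr-valuation domain'' you presumably mean the homogeneous localization $R_{(M)}$.
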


\begin{proof}
  Let \(R\) be an integrally closed graded integral domain.
  Suppose that \(R\) is a gr-Pr\"{u}fer domain.
  Let \(P\) be a graded prime ideal of \(R\) and \(T\) a graded overring of \(R\).
  By~\cite[Remark 2.9]{Kla-grperi}, \(T\) satisfies gr-GD\@.
  By~\cite[Theorem 3.1]{ACZ-grpruf}, $R_P$ is a valuation domain. It follows by~\cite[Theorem 19.15]{Gilmer}, it follows that \(R\subseteq T\) is an \(i\)-extension.
  Thus by Proposition~\ref{gr-pap76-lemma2.2}, it follows that \(R \subseteq T\) is gr-mated.

  Conversely, suppose that \(R\) is gr-mated. Let \(T\) be a graded overring of \(R\) and \(M \in h\text{-Spec}(T)\).
  Then \(P = M \cap R\) is a graded prime ideal of \(R\) such that \(PT \neq T\).
  So there exists at most one graded prime ideal of \(T\) lying over \(P\) because \(R\) is gr-mated.
  By~\cite[Theorem 19.15]{Gilmer}, it follows that \(R_P\) is a valuation domain and hence \(R_{(P)}\) is a gr-valuation
  domain by~\cite[Lemma 4.3]{Sah-chargrpvmd}.
  It follows that \(T_{(M)} = R_{(P)}\) is a gr-valuation domain by~\cite[Theorem 2.3]{AAC-grval}.
  Thus by~\cite[Proposition 2.7]{Kla-grperi}, \(T\) is flat over \(R\) and hence \(R\) is gr-Pr\"{u}fer
  by~\cite[Remark 2.9]{Kla-grperi}.
\end{proof}

\section{Characterization of Graded Injective Domains}

In the following proposition, we prove a graded analog of~\cite[Proposition 2.14]{Pap-topologically}
\begin{proposition}
  Let \(R\) be a \(\Gamma \)-graded domain.
  Then \(R\) is a gr-i-domain if and only if \(R \subseteq R'\) is a gr-\(i\)-extension and \(R^\prime \) is
  gr-Pr\"{u}fer domain.
\end{proposition}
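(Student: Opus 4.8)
The plan is to treat the two implications separately, using the localization characterization of gr-Pr\"ufer domains together with the behavior of the graded integral closure. For the forward implication, suppose $R$ is a gr-$i$-domain. The graded integral closure $R'$ lies in the graded quotient field $R_H$ and is again graded, so it is a graded overring of $R$; hence $R \subseteq R'$ is a gr-$i$-extension straight from the definition. I would next observe that $R'$ is itself a gr-$i$-domain: any graded overring $T$ of $R'$ satisfies $R \subseteq R' \subseteq T \subseteq R_H$ and so is a graded overring of $R$, and since $h\text{-Spec}(T) \to h\text{-Spec}(R)$ factors through $h\text{-Spec}(R')$ and is injective, the map $h\text{-Spec}(T) \to h\text{-Spec}(R')$ is injective as well.

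It then remains to show the integrally closed gr-$i$-domain $R'$ is gr-Pr\"ufer. Fixing a graded prime $P$ of $R'$, the gr-$i$-domain property gives at most one graded prime lying over $P$ in every graded overring of $R'$; exactly as in the reverse direction of the preceding proposition, \cite[Theorem 19.15]{Gilmer} then yields that $(R')_P$ is a valuation domain (moving between $(R')_P$ and $(R')_{(P)}$ via \cite[Lemma 4.3]{Sah-chargrpvmd}). As $P$ was arbitrary, $R'$ is gr-Pr\"ufer by \cite[Theorem 3.1]{ACZ-grpruf}.

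For the converse, assume $R \subseteq R'$ is a gr-$i$-extension and $R'$ is gr-Pr\"ufer, let $T$ be a graded overring of $R$, and take distinct graded primes $Q_1, Q_2$ of $T$ with $Q_1 \cap R = Q_2 \cap R = P$; the goal is $Q_1 = Q_2$. Let $T'$ be the graded integral closure of $T$ in $R_H$, so that monotonicity gives $R' \subseteq T'$, making $T'$ a graded overring of the gr-Pr\"ufer domain $R'$; consequently $R' \subseteq T'$ is a gr-$i$-extension, as established in the forward direction of the preceding proposition. Using graded lying-over for the integral extension $T \subseteq T'$, I would pick graded primes $Q_1^*, Q_2^*$ of $T'$ with $Q_i^* \cap T = Q_i$. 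Their contractions to $R'$ both contract to $P$ in $R$, so the gr-$i$-extension $R \subseteq R'$ forces $Q_1^* \cap R' = Q_2^* \cap R'$, and then the gr-$i$-extension $R' \subseteq T'$ forces $Q_1^* = Q_2^*$; contracting back to $T$ gives $Q_1 = Q_2$, so $R$ is a gr-$i$-domain.

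The main obstacle is marshalling the graded inputs the argument leans on: that the integral closure is graded and monotone inside $R_H$, that graded lying-over holds for the integral extension $T \subseteq T'$, and that a graded overring of a gr-Pr\"ufer domain is again a gr-$i$-extension. I expect the most delicate point to be the transfer of the unique-graded-prime-lying-over condition into the valuation property of $(R')_P$ through \cite[Theorem 19.15]{Gilmer} and \cite[Lemma 4.3]{Sah-chargrpvmd}, since that is where the passage between graded and ungraded data must be handled with care.
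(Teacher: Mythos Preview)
Your proof is correct and follows essentially the same route as the paper's: in the converse both arguments lift the offending primes of a graded overring $T$ to an integral graded extension containing $R'$ (you take the integral closure $T'$, the paper uses the compositum $R'T$) and then invoke the gr-Pr\"ufer property of $R'$ together with the gr-$i$-extension $R \subseteq R'$ to force equality. Your write-up is a bit more explicit---you first show $R'$ inherits the gr-$i$-domain property before concluding it is gr-Pr\"ufer, and you isolate the intermediate step $Q_1^{*} \cap R' = Q_2^{*} \cap R'$ that the paper leaves implicit---but the underlying argument is the same.
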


\begin{proof}
  Let \(R\) be a \(\Gamma \)-graded domain.
  The proof is parallel to the proof of~\cite[Proposition 2.14]{Pap-topologically} with appropriate graded substitutions.
  We include the details below for the reader's convenience.

  Suppose that \(R\) is a gr-\(i\)-domain.
  Then \(R \subseteq R^\prime \) is a gr-\(i\)-extension by definition of gr-\(i\)-domain.
  Also, \(R^\prime \) is gr-Pr\"{u}fer by~\cite[Theorem 19.15]{Gilmer} and~\cite[Theorem 3.1]{ACZ-grpruf}.

  Suppose \(R = R^\prime \) and \(R\) is gr-Pr\"{u}fer.
  Then it follows from~\cite[Theorem 3.5(1)]{ACZ-grpruf} that for any graded overring \(S\) of \(R\) there will be at most
  one graded prime ideal of \(S\) lying over any graded prime ideal of \(R\).
  It follows that \(R\) is a gr-\(i\)-domain.

  Now suppose that \(R \subsetneq R^\prime \) is a gr-\(i\)-extension and \(R^\prime \) is gr-Pr\"{u}fer domain and \(R\)
  is not a gr-\(i\)-domain.
  Then there exists a graded overring \(S\) of \(R\) and distinct graded prime ideals \(P\) and \(Q\) of \(S\) such that
  \(P \cap R = Q \cap R\).
  Consider the following diagram of ring extensions
  \[
    \xymatrix{
      R^\prime \: \mono[r] & R^{\prime}S\\
      R \: \mono[u] \mono[r] & S. \mono[u]
    }
  \]
  By~\cite[Lemma 1.6]{Cha-grintpruferlike}, \(R^\prime \) is a graded overring of \(R\).
  It follows that \(R^\prime S\) is a graded overring of \(S\) and is an integral extension because \(R^\prime \) is an
  integral extension of \(R\).
  Then there exist graded prime ideals \(P^\prime \) and \(Q^\prime \) of \(R^\prime S\) such that \(P^\prime \cap S = P\)
  and \(Q^\prime \cap S = Q\).
  So \[P^\prime \cap R = P \cap R = Q \cap R = Q^\prime \cap R.\]
  Because \(R^\prime \) is a gr-Pr\"{u}fer domain, it follows that \(R^\prime S\) is a gr-Pr\"{u}fer domain
  by~\cite[Theorem 3.5(2)]{ACZ-grpruf}.
  It then follows by~\cite[Theorem 3.1]{ACZ-grpruf} and~\cite[Theorem 19.15]{Gilmer} that \(P^\prime = Q^\prime \) which
  contradicts that \(P \neq Q\).
  Thus \(R\) is a gr-\(i\)-domain.
\end{proof}

\section*{Acknowledgments}

 We are very grateful for some helpful comments given by Neil Epstein which helped to improve this paper.

\bibliographystyle{amsalpha}
\bibliography{references}

\end{document}